\documentclass[11pt,leqno]{article}

\usepackage[T1]{fontenc}
\usepackage[utf8]{inputenc}
\usepackage[english]{babel}
\usepackage{lmodern}
\usepackage{microtype}
\usepackage[margin=1.05in]{geometry}
\usepackage{mathtools}
\usepackage{amssymb,amsmath,amsthm}
\usepackage{hyperref}
\usepackage{aliascnt}
\usepackage[nameinlink,noabbrev]{cleveref}

\newcommand{\abs}[1]{\left\lvert #1\right\rvert}

\newcommand{\logtwo}{\log_2}
\newcommand{\PP}{\mathcal{P}}

\theoremstyle{plain}
\newtheorem{theorem}{Theorem}[section]

\newaliascnt{lemma}{theorem}

\aliascntresetthe{lemma}

\newaliascnt{corollary}{theorem}
\newtheorem{corollary}[corollary]{Corollary}
\aliascntresetthe{corollary}

\newaliascnt{proposition}{theorem}
\newtheorem{proposition}[proposition]{Proposition}
\aliascntresetthe{proposition}

\theoremstyle{definition}
\newaliascnt{hypothesis}{theorem}
\newtheorem{hypothesis}[hypothesis]{Hypothesis}
\aliascntresetthe{hypothesis}

\newaliascnt{definition}{theorem}
\newtheorem{definition}[definition]{Definition}
\aliascntresetthe{definition}

\theoremstyle{remark}
\newaliascnt{remark}{theorem}
\newtheorem{remark}[remark]{Remark}
\aliascntresetthe{remark}

\theoremstyle{definition}
\newaliascnt{example}{theorem}
\newtheorem{example}[example]{Example}
\aliascntresetthe{example}

\crefname{theorem}{Theorem}{Theorems}
\Crefname{theorem}{Theorem}{Theorems}
\crefname{lemma}{Lemma}{Lemmas}
\Crefname{lemma}{Lemma}{Lemmas}
\crefname{corollary}{Corollary}{Corollaries}
\Crefname{corollary}{Corollary}{Corollaries}
\crefname{proposition}{Proposition}{Propositions}
\Crefname{proposition}{Proposition}{Propositions}
\crefname{hypothesis}{Hypothesis}{Hypotheses}
\Crefname{hypothesis}{Hypothesis}{Hypotheses}
\crefname{definition}{Definition}{Definitions}
\Crefname{definition}{Definition}{Definitions}
\crefname{remark}{Remark}{Remarks}
\Crefname{remark}{Remark}{Remarks}
\crefname{example}{Example}{Examples}
\Crefname{example}{Example}{Examples}

\title{Linear truncation for conditioned prime-factor fibres}
\author{J.~Verwee}
\date{}

\begin{document}
\maketitle

\begin{abstract}
In previous joint work with Tenenbaum, the truncation step $f\mapsto f_R$ in the conditional effective Erd\H{o}s--Wintner theorem on the fibre $\omega(n)=k$ yields, in the continuous case for real strongly additive $f$, a remainder of size $\eta_f(R)^{r/(r+1)}$, where $R$ is the truncation level and $r=k/\logtwo x$. We prove an effective linear truncation lemma showing that, in the central window $\kappa\leqslant r\leqslant 1/\kappa$, this bound improves to the natural linear scale $r\,\eta_f(R)$ under an effective Sathe--Selberg-type ratio estimate for the fibre. This yields a direct effective sharpening of the truncation step in the previous joint work. The same truncation upgrade also applies to prime-set restrictions, $\Omega$-fibres, and weighted fibres whenever the corresponding ratio estimate is available.
\end{abstract}

\section{A linear truncation lemma}\label{sec:abstract}

This section isolates the truncation step from the previous joint work with Tenenbaum \cite{TV}, namely the step that yields the term $\eta_f(R)^{r/(r+1)}$ in the continuous case. The key point is that, on the fibre $\omega(n)=k$ and on related fibres, the integers for which truncation alters the value of $f$ can be counted more sharply than by a coarse H\"older-type argument. To make this mechanism transparent, we formulate it abstractly in terms of a generic prime-factor fibre and a single ratio estimate. Throughout, for integers $n\geqslant 1$, we write $\log_n x$ for the $n$-fold iterated logarithm, defined recursively by
\[
\log_1 x:=\log x,
\qquad
\log_{n+1}x:=\log\left(\log_n x\right).
\]
In particular, $\log_2 x=\log\log x$ and $\log_3 x=\log\log\log x$. We also write $\PP$ for the set of primes.

\subsection{Truncation tails}\label{subsec:tails}

Let $f$ be a real \emph{strongly additive} function, meaning $f(p^\nu)=f(p)$ for all primes $p$ and
$\nu\geqslant 1$, and $f(mn)=f(m)+f(n)$ whenever $(m,n)=1$. Assume that $f$ satisfies the hypotheses of the Erd\H{o}s--Wintner theorem, that are
\begin{equation}\label{hypEW}
    \sum_{p\in\PP} \frac{\min(1,f(p)^2)}{p}<\infty, \quad \text{and} \quad \sum_{\substack{p\in\PP\\|f(p)|\leqslant 1}} \frac{f(p)}{p} \text{ converges.}
\end{equation}
We also assume the following continuity condition:
\begin{equation}\label{continu}
    \sum_{f(p)\neq 0} \frac{1}{p} = \infty.
\end{equation}
As in \cite{TV}, we fix a \emph{tail function} $\eta_f$ with the following property: $\eta_f(y)$ is
continuous, non-increasing, tends to $0$ as $y\to\infty$, and for every $y>1$,
\begin{equation}\label{eq:eta}
\abs{\sum_{\substack{p>y\\ \abs{f(p)}\leqslant 1}}\frac{f(p)}{p}}\ \leqslant\ \eta_f(y),
\qquad
\sum_{p^\nu>y}\frac{\min\{1,f(p)^2\}}{p^\nu}\ \leqslant\ \eta_f(y).
\end{equation}
Such a choice exists thanks to \eqref{hypEW}.

Given $R\geqslant 3$, define the truncation $f_R$ on prime powers by
\[
f_R(p^\nu):=
\begin{cases}
f(p),& p\leqslant R\ \text{or}\ \abs{f(p)}\leqslant 1,\\
0,& p>R\ \text{and}\ \abs{f(p)}>1,
\end{cases}
\qquad(\nu\geqslant 1),
\]
and extend additively.
The only primes that can change the value of $f$ under truncation are the \emph{large primes on which $f$ is large}:
\[
P_R:=\{p>R:\ \abs{f(p)}>1\}.
\]
Since $\min\{1,f(p)^2\}=1$ for $p\in P_R$, the second inequality in \eqref{eq:eta} implies
\begin{equation}\label{eq:PReta}
\sum_{p\in P_R}\frac{1}{p}\ \leqslant\ \eta_f(R).
\end{equation}

\subsection{A ratio hypothesis on a fibre}\label{subsec:ratio}

Let $h:\mathbb N\to\mathbb N$ be a function. For $x\geqslant 3$ and $k\geqslant 1$, define the fibre and its cardinality
\[
\mathcal F(x;k):=\{n\in\mathbb N:\ n\leqslant x,\ h(n)=k\},
\qquad
N_k(x):=\#\mathcal F(x;k).
\]

\begin{definition}\label{def:admissible}
We call the family $\mathcal F(x;k)$ an \emph{admissible fibre} if it satisfies the following two
structural properties.
First, it is stable under removing one prime factor:
\begin{equation}\label{eq:stab}
n\in\mathcal F(x;k),\ p\mid n,\ n=p^\nu m,\ (m,p)=1
\quad\Longrightarrow\quad
m\in \mathcal F(x/p^\nu;\,k-1).
\end{equation}
Second, it has a simple lower support bound:
\begin{equation}\label{eq:support}
n\in\mathcal F(x;k)\quad\Longrightarrow\quad n\geqslant 2^k.
\end{equation}
\end{definition}

These properties hold for the classical fibre $\mathcal F(x;k)=\{n\leqslant x:\omega(n)=k\}$, and for several natural variants (e.g.\ restriction to a prime set). We work in the central window
\begin{equation}\label{eq:window}
\kappa\leqslant r:=\frac{k}{\logtwo x}\leqslant \frac{1}{\kappa},
\qquad \kappa\in(0,1)\ \text{fixed}.
\end{equation}
The only analytic input needed for the truncation bound is the following Sathe--Selberg-type ratio estimate on an admissible fibre.

\begin{hypothesis}\label{hyp:ratio}
There exist constants $x_0(\kappa)\geqslant 3$ and $C(\kappa)>0$ such that, uniformly for $x\geqslant x_0(\kappa)$,
$k\geqslant 1$ with \eqref{eq:window}, and every integer $m$ with $2\leqslant m\leqslant x/2^{k-1}$ (the range where $N_{k-1}(x/m)$ may be nonzero by \eqref{eq:support}), one has
\begin{equation}\label{eq:ratio}
N_{k-1}\left(\frac{x}{m}\right)\ \leqslant\ C(\kappa)\,\frac{r}{m}\,N_k(x).
\end{equation}
\end{hypothesis}

\subsection{Linear truncation}\label{subsec:linear}

\begin{theorem}\label{thm:abstract}
Assume that $(\mathcal F(x;k))$ is an admissible fibre and \Cref{hyp:ratio} holds. Then there exists a constant
$C_\kappa>0$ such that, uniformly for $x\geqslant 3$, $k\geqslant 1$ with \eqref{eq:window}, and all $R\in[3,x]$,
\[
\#\{n\in \mathcal F(x;k):\ f_R(n)\ne f(n)\}
\ \leqslant\ C_\kappa\, r\,\eta_f(R)\,N_k(x).
\]
\end{theorem}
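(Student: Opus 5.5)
The plan is a union bound over the primes responsible for the change, followed by the stability property and the ratio estimate. If $f_R(n)\ne f(n)$, then by the definition of $f_R$ there is a prime $p\mid n$ with $p\in P_R$. Hence
\[
\#\{n\in\mathcal F(x;k):\ f_R(n)\ne f(n)\}\ \leqslant\ \sum_{p\in P_R}\ \sum_{\nu\geqslant 1}\#\{n\in\mathcal F(x;k):\ p^\nu\,\|\,n\}.
\]
For fixed $p^\nu$, write $n=p^\nu m$ with $(m,p)=1$. By \eqref{eq:stab}, $m\in\mathcal F(x/p^\nu;k-1)$, and the map $n\mapsto m$ is injective. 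So the inner count is at most $N_{k-1}(x/p^\nu)$, and this term vanishes unless $p^\nu\leqslant x/2^{k-1}$, by \eqref{eq:support} applied at level $k-1$.

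Next I apply \eqref{eq:ratio} with the integer $p^\nu\geqslant p>R\geqslant 3$, so $p^\nu\geqslant 2$. This gives $N_{k-1}(x/p^\nu)\leqslant C(\kappa)\,r\,N_k(x)/p^\nu$ for $x\geqslant x_0(\kappa)$. Summing over $\nu$ gives
\[
\sum_{\nu\geqslant1}p^{-\nu}\leqslant 2/p,
\]
so the total is at most $2C(\kappa)\,r\,N_k(x)\sum_{p\in P_R}1/p\leqslant 2C(\kappa)\,r\,\eta_f(R)\,N_k(x)$ by \eqref{eq:PReta}. Alternatively one can avoid the $2/p$ step by invoking the second inequality of \eqref{eq:eta} directly on the prime powers $p^\nu>R$ with $p\in P_R$.

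The remaining case $3\leqslant x<x_0(\kappa)$ needs separate treatment, and this is where I expect the only real care to be needed, since \Cref{hyp:ratio} is unavailable there. In this range $\logtwo x$ is bounded, so $k\leqslant \logtwo x_0/\kappa$ is bounded. By \eqref{eq:window}, $r\geqslant\kappa$. Every $n$ counted has a prime factor $p>R$, so its count is nonzero only if $R<x<x_0$, and then $\eta_f(R)\geqslant\eta_f(x_0(\kappa))$ by monotonicity. If $\eta_f(x_0)>0$, the left side is trivially at most $N_k(x)$, which is at most $\kappa^{-1}\eta_f(x_0)^{-1}\, r\,\eta_f(R)\,N_k(x)$, and that constant depends only on $\kappa$ and $f$.

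If $\eta_f(x_0)=0$, the second inequality in \eqref{eq:eta} gives $P_R\cap(R,\infty)=\emptyset$ for all $R\geqslant x_0$. For $R<x_0$, note that $\eta_f(R)=0$ forces $P_R=\emptyset$ outright, so the bound holds trivially. When $\eta_f(R)>0$ and $R<x<x_0$, we use $\eta_f(R)\geqslant\min_{[3,x_0]}\{\eta_f>0\}$, or simply absorb the case by taking $\eta_f$ strictly positive as in \cite{TV}. Either way a constant $C_\kappa$, possibly depending on $f$ through $x_0(\kappa)$, covers the small-$x$ range. Taking $C_\kappa$ as the maximum of the two constants finishes the proof.
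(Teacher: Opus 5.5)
Your proof follows the paper's argument step for step: the union bound over $p\in P_R$ and $\nu\geqslant 1$, the stability property \eqref{eq:stab}, the restriction to $p^\nu\leqslant x/2^{k-1}$ via \eqref{eq:support}, \Cref{hyp:ratio} with $m=p^\nu$, the bound $\sum_{\nu}p^{-\nu}\leqslant 2/p$, \eqref{eq:PReta}, and finally absorbing $3\leqslant x<x_0(\kappa)$ into the constant. In that last step, your case $\eta_f(x_0)=0$ cannot occur under \eqref{continu}, since infinitely many $p$ with $f(p)\neq 0$ make every tail in \eqref{eq:eta} positive; your positive-minimum argument there would fail anyway, because by continuity that infimum is $0$. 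A cleaner, $f$-independent absorption: a nonzero count with $x<x_0(\kappa)$ forces some $p\in P_R$ with $p\leqslant x$, so $\eta_f(R)\geqslant 1/p>1/x_0(\kappa)$, and $C_\kappa=\max\{2C(\kappa),\,x_0(\kappa)/\kappa\}$ suffices.
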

The proof is a direct count: $f_R$ differs from $f$ if and only if $n$ possesses at least one prime
factor in $P_R$.

\begin{proof}
If $f_R(n)\ne f(n)$, then $n$ has a prime divisor $p\in P_R$. Write $n=p^\nu m$ with
$\nu=v_p(n)\geqslant 1$ and $(m,p)=1$. By \eqref{eq:stab}, we have $m\in\mathcal F(x/p^\nu;\,k-1)$, hence
the number of admissible $m$ is at most $N_{k-1}(x/p^\nu)$. Summing over $(p,\nu)$ yields
\begin{equation}\label{eq:red}
\#\{n\in \mathcal F(x;k):\ f_R(n)\ne f(n)\}
\leqslant \sum_{p\in P_R}\ \sum_{\nu\geqslant 1} N_{k-1}\left(\frac{x}{p^\nu}\right).
\end{equation}

We now explain why only a restricted range of $p^\nu$ can contribute.
If $N_{k-1}(x/p^\nu)\ne 0$, then there exists $m\in\mathcal F(x/p^\nu;\,k-1)$, and by the support bound
\eqref{eq:support} we have $m\geqslant 2^{k-1}$. Since also $m\leqslant x/p^\nu$, it follows that
\[
p^\nu\leqslant \frac{x}{2^{k-1}}.
\]
Therefore the double sum in \eqref{eq:red} may be restricted to $p^\nu\leqslant x/2^{k-1}$.

For $x\geqslant x_0(\kappa)$, \Cref{hyp:ratio} applies to $m=p^\nu$ throughout this range, and gives
\[
\#\{n\in \mathcal F(x;k):\ f_R(n)\ne f(n)\}
\leqslant C(\kappa)\,r\,N_k(x)\sum_{p\in P_R}\sum_{\nu\geqslant 1}\frac{1}{p^\nu}.
\]
Since $\sum_{\nu\geqslant 1}p^{-\nu}=(p-1)^{-1}\leqslant 2/p$ for $p\geqslant 2$, we obtain
\[
\#\{n\in \mathcal F(x;k):\ f_R(n)\ne f(n)\}
\leqslant 2C(\kappa)\,r\,N_k(x)\sum_{p\in P_R}\frac{1}{p}
\leqslant 2C(\kappa)\,r\,\eta_f(R)\,N_k(x),
\]
by \eqref{eq:PReta}. The remaining range $3\leqslant x<x_0(\kappa)$ is absorbed into the constant.
\end{proof}

\medskip

Heuristically, the factor $\sum_{p\in P_R}1/p$ measures the harmonic size of the set of primes on which truncation
acts. On a fibre with $k\asymp \logtwo x$, one expects a proportion $\asymp r\sum_{p\in P_R}1/p$ of
integers to contain at least one such prime, so the scale $r\,\eta_f(R)$ is the natural one.

\medskip

On effectivity, all constants are effective in the following sense.
If \Cref{hyp:ratio} holds with a constant $C(\kappa)$ for $x\geqslant x_0(\kappa)$, then the proof gives
\Cref{thm:abstract} with $C_\kappa=2C(\kappa)$ after possibly enlarging it to absorb the finite range
$3\leqslant x<x_0(\kappa)$. In particular, any effective Tenenbaum--Verwee argument that uses truncation only through
the exceptional-set count inherits an effective improvement once \Cref{thm:abstract} is inserted.

\section{Application to the function $\omega$}\label{sec:TV}

In this section we specialise the abstract truncation lemma to the classical level set
\[
\mathcal E(x;k):=\{n\leqslant x:\ \omega(n)=k\},
\qquad
\pi_k(x):=\#\mathcal E(x;k).
\]
Then the family $\mathcal E(x;k)$ is an admissible fibre. Moreover, in the central window
\eqref{eq:window} a ratio estimate of the form \eqref{eq:ratio} is provided by the uniform asymptotic
formulae and local quotient bounds for $\pi_k(x)$ proved by Hildebrand--Tenenbaum \cite{HT1988} (see in particular \cite[(1.2), (2.13), (2.14)]{HT1988}). Consequently, \Cref{thm:abstract} yields
the following linear truncation bound on $\omega$-fibres.

\begin{corollary}\label{cor:omega}
Fix $\kappa\in(0,1)$. Then there exists a constant $C_\kappa>0$ such that, uniformly for
$x\geqslant 3$, $k\geqslant 1$ satisfying \eqref{eq:window}, and $R\in[3,x]$
\[
\#\left\{n\leqslant x:\ \omega(n)=k,\ f_R(n)\neq f(n)\right\}
\leqslant C_\kappa\, r\,\eta_f(R)\,\pi_k(x).
\]
\end{corollary}

We now record the consequence of \Cref{cor:omega} for the conditional effective
Erd\H{o}s--Wintner theorem of \cite[Theorem~1.3]{TV}. In that argument, the distribution of $f$
on the fibre $\omega(n)=k$ is compared to a model law $\mathcal F_r$; this distribution function,
introduced in \cite{TV}, is unrelated to our fibre notation $\mathcal F(x;k)$. The truncation
$f\mapsto f_R$ enters only through an upper bound for the exceptional set
\[
\left\{n\leqslant x:\ \omega(n)=k,\ f_R(n)\neq f(n)\right\}.
\]
Replacing that bound by \Cref{cor:omega} sharpens the truncation contribution, while leaving the
smoothing and mean-value parts of the proof unchanged.

Recall that in \cite[\S1]{TV} the parameters $v$, $T$, and $R$ are chosen so that
\begin{equation}\label{eq:TV-115}
\frac{1}{\logtwo x}\leqslant v\leqslant c_0,\quad
3\leqslant R\leqslant \mathrm e^{1/v},\quad
T>1,\quad
T^2\eta_f(R)\leqslant \log(1/v),\quad
T^2\eta_f(x^w)\leqslant w,\qquad w:=v c_1,
\end{equation}
where $c_0,c_1>0$ depend at most on $\kappa$. For completeness we recall the auxiliary quantities
appearing in the resulting estimate. For $u\geqslant 1$, set
\[
B_f(u)^2:=2+\sum_{p\leqslant u}\frac{f(p)^2}{p},
\]
which agrees with \cite[(1.8)]{TV} in the strongly additive case. For $r>0$, define the model
characteristic function
\[
\varphi(\tau;r):=\prod_p\frac{p-1+r\,\mathrm e^{i\tau f(p)}}{p-1+r}
\qquad (\tau\in\mathbb R),
\]
and let $\mathcal F_r$ denote the corresponding distribution function. Finally, for $\ell>0$, write
\[
Q_{\mathcal F_r}(\ell):=\sup_{y\in\mathbb R}\left(\mathcal F_r(y+\ell)-\mathcal F_r(y)\right).
\]

\begin{corollary}\label{cor:TV}
Let $\kappa\in(0,1)$ and let $f$ be real and strongly additive. Assume \eqref{hypEW} and
\eqref{continu} hold, and fix a tail function $\eta_f$ satisfying \eqref{eq:eta}. Then, uniformly for
$x\geqslant 3$, $k\geqslant 1$ with $\kappa\leqslant r:=k/\logtwo x\leqslant 1/\kappa$, $y\in\mathbb R$, and
$v$, $T$, $R$ satisfying \eqref{eq:TV-115}, one has
\[
\frac{1}{\pi_k(x)}\sum_{\substack{n\in\mathcal E(x;k)\\ f(n)\leqslant y}}1
=
\mathcal F_r(y)+O\left(\mathcal R^\ast\right),
\]
where
\[
\mathcal R^\ast
:=
Q_{\mathcal F_r}\left(\frac{1}{T}\right)
+\left(v+\frac{\log(1/v)}{\sqrt{k}}\right)\log\left(\frac{T\,B_f(R)}{v}\right)
+C_\kappa\,r\,\eta_f(R).
\]
Equivalently, in \cite[(1.16)]{TV} the truncation contribution coming from \cite[Lemma~5.1]{TV}
may be sharpened by replacing the factor $\eta_f(R)^{r/(r+1)}$ in $\sigma_f(R)$ by
$C_\kappa\,r\,\eta_f(R)$, under the same admissibility conditions \eqref{eq:TV-115}.
\end{corollary}

The next example illustrates the gain provided by \Cref{cor:TV}.

\begin{example}
Consider the example from \cite[\S1]{TV} where
\[
f(p):=\frac{1}{\left(\log p\right)^{\xi}}
\qquad \left(0<\xi<r\right).
\]
One checks from \eqref{eq:eta} that one may take
$\eta_f(y)\ll 1/\left(\log y\right)^{\xi}$, hence
$\eta_f(\log x)\ll 1/\left(\logtwo x\right)^{\xi}$.
Moreover,
$\sum_p \frac{f(p)^2}{p}
=\sum_p \frac{1}{p\left(\log p\right)^{2\xi}}<\infty$,
so that $B_f(u)\asymp 1$ for $u\geqslant 2$.
Also, \cite[Exercise~259]{TenWu2014} implies that
\[
\abs{\varphi(\tau;r)}\ll \abs{\tau}^{-r/\xi}\left(\log\abs{\tau}\right)^{O(1)}
\qquad \left(\abs{\tau}\to\infty\right).
\]
Since $r/\xi>1$, for $\ell$ sufficiently small, \cite[Lemma~III.2.9]{Tenenbaum2015IAPNT} gives
\[
Q_{\mathcal F_r}(\ell)\ll \ell\int_{-1/\ell}^{1/\ell}\abs{\varphi(\tau;r)}\,d\tau
\ll \ell\left(1+\int_1^{1/\ell} t^{-r/\xi}\left(\log t\right)^{O(1)}\,dt\right)
\ll \ell
\qquad \left(\ell\to 0\right).
\]
Choose
\[
v:=\frac{1}{\logtwo x},
\qquad
R:=\mathrm e^{1/v}=\log x,
\qquad
T\asymp
\min\left\{
\frac{\sqrt{\logtwo x}}{\log_3 x},
\left(\logtwo x\right)^{\xi/2}\left(\log_3 x\right)^{1/2}
\right\}.
\]
Then \eqref{eq:TV-115} holds. If $0<\xi<1$, the remainder in \Cref{cor:TV} satisfies
\[
\mathcal R^\ast\ll_\kappa
\frac{1}{\left(\logtwo x\right)^{\xi/2}\sqrt{\log_3 x}}
+\frac{\left(\log_3 x\right)^2}{\sqrt{\logtwo x}},
\]
whereas for $\xi\geqslant 1$ one has
\[
\mathcal R^\ast\ll_\kappa \frac{\left(\log_3 x\right)^2}{\sqrt{\logtwo x}}.
\]
In particular, the truncation contribution is now
$O_\kappa\left(\left(\logtwo x\right)^{-\xi}\right)$,
whereas in \cite[(1.16)]{TV} it is
$O\left(\left(\logtwo x\right)^{-\xi r/(r+1)}\right)$.
Thus one gains a factor $\left(\logtwo x\right)^{\xi/(r+1)}$, up to the bounded factor $r$.
\end{example}

\section{Further conditioned fibres}\label{sec:extensions}

The same counting argument applies to several variants in which the truncation step can be isolated in the same way. In each case, the role of the fibre is to provide a suitable ratio estimate, and once this is available, the truncation contribution improves to the corresponding linear scale.

\subsection{Prime factors in a prime set}\label{subsec:prime-set}

Let $E$ be a nonempty set of primes. For $n\in\mathbb N$, define the strongly additive function
\[
\omega(n;E):=\sum_{\substack{p\mid n\\ p\in E}}1,
\]
namely the number of distinct prime divisors of $n$ that lie in $E$. For $x\geqslant 3$ and $k\geqslant 0$, set
\[
\mathcal F_E(x;k):=\left\{n\in\mathbb N:\ n\leqslant x,\ \omega(n;E)=k\right\},
\qquad
N_{E,k}(x):=\#\mathcal F_E(x;k).
\]
Write $p_{\min}(E):=\min E$. Since any $n\in\mathcal F_E(x;k)$ has $k$ distinct prime divisors in $E$, each at least $p_{\min}(E)$, it satisfies the support bound
\begin{equation}\label{eq:supportE}
n\in\mathcal F_E(x;k)\quad\Longrightarrow\quad n\geqslant p_{\min}(E)^k.
\end{equation}
Set
\[
E(x):=\sum_{\substack{p\leqslant x\\ p\in E}}\frac{1}{p}.
\]
Assume the effective harmonic density condition
\begin{equation}\label{eq:density}
E(x)=\delta\logtwo x+O(1)
\qquad (x\to\infty),
\end{equation}
for some $\delta\in(0,1]$, so in particular $E(x)\to\infty$. A quantitative Sathe--Selberg regime for $\omega(\,\cdot\,;E)$ follows from \cite[\S2.1]{Tenenbaum2017ME} together with the remark following
\cite[Corollary~2.4]{Tenenbaum2017ME}. More precisely, the upper range
$m\leqslant (2-\kappa)E(x)$ in \cite[(2.21)--(2.22)]{Tenenbaum2017ME} is replaced by
$m\leqslant (1/\kappa)E(x)$. We record the ratio bounds needed for the truncation step.

For $x\geqslant 3$ and $k\geqslant 0$, write
\[
r_E:=\frac{k}{E(x)}.
\]

\begin{proposition}\label{prop:ratioE}
Fix $\kappa\in(0,1)$ and assume \eqref{eq:density}. Then there exists $x_0(\kappa)\geqslant 3$ such that, uniformly for
$x\geqslant x_0(\kappa)$, $\kappa\leqslant r_E\leqslant 1/\kappa$, and all primes
$p\leqslant x^{1-\kappa/2}$, one has
\[
N_{E,k}\left(\frac{x}{p}\right)\asymp_\kappa \frac{1}{p}\,N_{E,k}(x),
\]
and, if moreover $p\in E$ and $k\geqslant 1$,
\[
N_{E,k-1}\left(\frac{x}{p}\right)\asymp_\kappa \frac{r_E}{p}\,N_{E,k}(x).
\]
\end{proposition}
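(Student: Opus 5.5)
We will derive both estimates from a uniform Sathe--Selberg asymptotic for $N_{E,k}$. The plan is to invoke the extended form of \cite[(2.21)--(2.22)]{Tenenbaum2017ME}, with the upper range enlarged to $k\leqslant (1/\kappa)E(x)$ as in the remark after \cite[Corollary~2.4]{Tenenbaum2017ME}. It gives, uniformly for $x\geqslant x_0$ and $1\leqslant k\leqslant E(x)/\kappa'$ with $\kappa'$ slightly smaller than $\kappa$,
\[
N_{E,k}(x)=\frac{x\,\mathrm e^{-E(x)}E(x)^{k}}{k!}\,\lambda_E\!\left(\frac{k}{E(x)}\right)\left\{1+O_\kappa\!\left(\frac{1}{E(x)}\right)\right\}.
\]
Here $\lambda_E(z)$ is the usual Selberg factor $\frac{1}{\Gamma(1+z)}\prod_{p}(1+z\mathbf 1_E(p)/(p-1))(1-1/p)^{z\mathbf 1_E(p)}$ (with $\mathbf 1_E(p)$ the indicator that $p\in E$). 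This factor is continuous and bounded above and below by positive constants on $[\kappa'/2,2/\kappa']$.

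The first key step is to check that for $p\leqslant x^{1-\kappa/2}$ the point $x/p$ still lies in the admissible range. Since $\log(x/p)\geqslant(\kappa/2)\log x$, the density assumption \eqref{eq:density} gives
\[
E(x/p)=E(x)+O_\kappa(1).
\]
This is exactly why the condition $p\leqslant x^{1-\kappa/2}$ is imposed. Consequently $k/E(x/p)$ and $(k-1)/E(x/p)$ stay in a compact subinterval of $(0,\infty)$ depending only on $\kappa$, at least once $x\geqslant x_0(\kappa)$ makes $E(x)$ large. In particular $k\geqslant\kappa E(x)\to\infty$, so $k-1\geqslant 1$ is harmless, and the asymptotic applies at both $x$ and $x/p$.

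Next we take quotients. For the first estimate,
\[
\frac{N_{E,k}(x/p)}{N_{E,k}(x)}=\frac1p\,\mathrm e^{E(x)-E(x/p)}\left(\frac{E(x/p)}{E(x)}\right)^{k}\frac{\lambda_E(k/E(x/p))}{\lambda_E(k/E(x))}\,(1+o(1)).
\]
We bound the three factors on the right as follows:
\begin{itemize}
\item the exponential factor is $\asymp_\kappa 1$, because $E(x)-E(x/p)=O_\kappa(1)$;
\item the power factor satisfies $\bigl(1-O(1)/E(x)\bigr)^{k}=\exp\bigl(O(k/E(x))\bigr)\asymp_\kappa1$, because $k/E(x)\leqslant1/\kappa$;
\item the $\lambda_E$ quotient is $\asymp_\kappa 1$ by the two-sided bounds on $\lambda_E$.
\end{itemize}
This gives $N_{E,k}(x/p)\asymp_\kappa N_{E,k}(x)/p$. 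For the second estimate the same computation has the factor
\[
\frac{E(x/p)^{k-1}}{(k-1)!}\cdot\frac{k!}{E(x)^k}=\frac{k}{E(x)}\left(\frac{E(x/p)}{E(x)}\right)^{k-1},
\]
which produces the extra factor $r_E$ up to $\asymp_\kappa1$. Hence $N_{E,k-1}(x/p)\asymp_\kappa r_E N_{E,k}(x)/p$.

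The main obstacle is justifying the uniform asymptotic with relative error $o(1)$, or at least with two-sided $\asymp$ control, over the full window $r_E\leqslant1/\kappa$ rather than $r_E<2-\kappa$. The problem is that $\lambda_E(z)$ contains the factor $\prod_{p\in E}(1+z/(p-1))$. If $E$ contains $2$ and $z\geqslant 1$, then the small primes of $E$ make the generating Dirichlet series $\sum_n z^{\omega(n;E)}n^{-s}$ acquire poles. For $z\geqslant p-1$ the term from the prime $p$ dominates, so the simple Selberg–Delange shape fails.

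My first attempt is to factor out the primes of $E$ below a fixed bound $P(\kappa)>1+1/\kappa$. I would treat the remaining primes by the Selberg–Delange method, which is the content of the cited remark. The small primes would then be reinserted by a finite convolution, which changes $N$ by bounded factors. Ratio bounds of order $\asymp_\kappa$, rather than sharp asymptotics, are all that is required, so those bounded factors do no harm. If this reinsertion turns out to be delicate, a fallback is the local quotient technique of Hildebrand–Tenenbaum \cite[(2.13), (2.14)]{HT1988}, adapted to $E$.
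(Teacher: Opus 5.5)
Your argument is essentially the paper's: the same extended-range Sathe--Selberg input from \cite{Tenenbaum2017ME}, the same observation that $E(x/p)=E(x)+O_\kappa(1)$ for $p\leqslant x^{1-\kappa/2}$ keeps both $(x/p,k)$ and $(x/p,k-1)$ in the admissible range, and the same factor-by-factor quotient computation, including the identity that produces the extra factor $k/E(x)=r_E$. Two corrections, neither affecting the logic: first, the paper uses only the two-sided bound $N_{E,m}(y)\asymp_\kappa y\,\mathrm e^{-E(y)}E(y)^m/m!$ from \cite[(2.15)]{Tenenbaum2017ME}, which is all you need and all that the bare harmonic-density hypothesis \eqref{eq:density} reasonably supports, so you should drop the asymptotic with a fixed factor $\lambda_E$ and relative error $O(1/E(x))$, together with the $\lambda_E$-quotient step; second, your ``main obstacle'' is not one, because for $\omega(\cdot\,;E)$ the Euler factor $1+z/(p^s-1)$ is holomorphic in $\operatorname{Re}s>0$ for every $z$ (the pole at $p^s=z$ belongs to the $\Omega$ factor $(1-z/p^s)^{-1}$, which is what produces ranges like $(2-\kappa)E(x)$), so no separate treatment of small primes is needed and the enlarged range is taken directly from the cited remark.
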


\begin{proof}
Set $\kappa':=\kappa/2$. By \cite[(2.15)]{Tenenbaum2017ME} together with the remark following
\cite[Corollary~2.4]{Tenenbaum2017ME}, there exists $x_1(\kappa)\geqslant 3$ such that, uniformly for
$y\geqslant x_1(\kappa)$ and $\kappa' E(y)\leqslant m\leqslant (1/\kappa')E(y)$, one has
\[
N_{E,m}(y)\asymp_\kappa y\,\mathrm e^{-E(y)}\,\frac{E(y)^m}{m!}.
\]

For primes $p\leqslant x^{1-\kappa/2}$ one has $\log(x/p)\geqslant (\kappa/2)\log x$, hence
$x/p\to\infty$ as $x\to\infty$, uniformly in $p$. Therefore, by \eqref{eq:density},
\[
E\left(\frac{x}{p}\right)=\delta\logtwo\left(\frac{x}{p}\right)+O(1)
=\delta\logtwo x+O_\kappa(1)
=E(x)+O_\kappa(1).
\]
Since $E(x)\to\infty$, there exists $x_2(\kappa)\geqslant 3$ such that, for all
$x\geqslant x_2(\kappa)$ and all primes $p\leqslant x^{1-\kappa/2}$,
\[
\frac{1}{2}E(x)\leqslant E\left(\frac{x}{p}\right)\leqslant 2E(x)
\qquad\text{and}\qquad
\kappa E(x)-1\geqslant \frac{\kappa}{2}E\left(\frac{x}{p}\right).
\]
Set
\[
x_0(\kappa):=\max\left\{x_2(\kappa),\,x_1(\kappa)^{2/\kappa}\right\}.
\]

Let now $x\geqslant x_0(\kappa)$, assume $\kappa\leqslant r_E=k/E(x)\leqslant 1/\kappa$, and let
$p\leqslant x^{1-\kappa/2}$ be prime. Then $x/p\geqslant x^{\kappa/2}\geqslant x_1(\kappa)$, and
\[
\frac{\kappa}{2}E(y)\leqslant k\leqslant \frac{2}{\kappa}E(y)
\qquad \left(y\in\left\{x,\frac{x}{p}\right\}\right).
\]
If in addition $k\geqslant 1$, then also
\[
\frac{\kappa}{2}E\left(\frac{x}{p}\right)\leqslant k-1\leqslant \frac{2}{\kappa}E\left(\frac{x}{p}\right).
\]
Therefore the Sathe--Selberg estimate applies with $(y,m)=(x,k)$, $(x/p,k)$, and, in the second part,
$(x/p,k-1)$.

Applying it with $(y,m)=(x/p,k)$ and $(y,m)=(x,k)$, we obtain
\[
\frac{N_{E,k}\left(x/p\right)}{N_{E,k}(x)}
\asymp_\kappa
\frac{1}{p}\,
\exp\left(E(x)-E\left(x/p\right)\right)\,
\left(\frac{E\left(x/p\right)}{E(x)}\right)^k.
\]
Since $E\left(x/p\right)=E(x)+O_\kappa(1)$, we have
\[
\exp\left(E(x)-E\left(x/p\right)\right)\asymp_\kappa 1
\qquad\text{and}\qquad
\frac{E\left(x/p\right)}{E(x)}=1+O_\kappa\left(\frac{1}{E(x)}\right).
\]
Hence
\[
\left(\frac{E\left(x/p\right)}{E(x)}\right)^k
=
\exp\left(k\log\left(1+O_\kappa\left(\frac{1}{E(x)}\right)\right)\right)
=
\exp\left(O_\kappa\left(\frac{k}{E(x)}\right)\right)
\asymp_\kappa 1,
\]
because $\kappa\leqslant k/E(x)\leqslant 1/\kappa$. Thus
\[
N_{E,k}\left(\frac{x}{p}\right)\asymp_\kappa \frac{1}{p}\,N_{E,k}(x).
\]

If moreover $p\in E$ and $k\geqslant 1$, the same estimate with $(y,m)=(x/p,k-1)$ and $(y,m)=(x,k)$ yields
\[
\frac{N_{E,k-1}\left(x/p\right)}{N_{E,k}(x)}
\asymp_\kappa
\frac{1}{p}\,
\exp\left(E(x)-E\left(x/p\right)\right)\,
\frac{k}{E(x)}\,
\left(\frac{E\left(x/p\right)}{E(x)}\right)^{k-1}.
\]
Moreover,
\[
\left(\frac{E\left(x/p\right)}{E(x)}\right)^{k-1}
=
\exp\left((k-1)\log\left(1+O_\kappa\left(\frac{1}{E(x)}\right)\right)\right)
\asymp_\kappa 1,
\]
and $\frac{k}{E(x)}=r_E\asymp_\kappa 1$. Therefore
\[
N_{E,k-1}\left(\frac{x}{p}\right)\asymp_\kappa \frac{r_E}{p}\,N_{E,k}(x).
\]
\end{proof}

The previous proposition provides the local ratio estimate in the range
\[
p\leqslant x^{1-\kappa/2}.
\]
This is the range in which $E(x/p)$ remains comparable to $E(x)$, and hence in which the
Sathe--Selberg asymptotic yields a quotient of the expected size. A full linear truncation bound
for the fibre $\mathcal F_E(x;k)$ would require, in addition, a separate treatment of the complementary
range $p>x^{1-\kappa/2}$ inside the exceptional-set count. We do not pursue this here.

\subsection{Conditioning on \texorpdfstring{$\Omega(n)$}{Omega(n)}}\label{sec:Omega}

Let $\Omega(n)$ denote the number of prime factors counted with multiplicity. For $x\geqslant 3$ and $k\geqslant 1$
set
\[
\mathcal F^\Omega(x;k):=\{n\leqslant x:\ \Omega(n)=k\},
\qquad
N_k^\Omega(x):=\#\mathcal F^\Omega(x;k).
\]
We isolate below the ratio estimate needed to upgrade the truncation term on $\Omega$-fibres.

\begin{hypothesis}\label{hyp:ratioOmega}
There exist constants $x_0(\kappa)\geqslant 3$ and $C(\kappa)>0$ such that, uniformly for
$x\geqslant x_0(\kappa)$, $k\geqslant 1$ with \eqref{eq:window}, and all primes
\[
p\leqslant \frac{x}{2^{k-1}},
\]
one has
\[
N_{k-1}^\Omega\left(\frac{x}{p}\right)\leqslant C(\kappa)\,\frac{r}{p}\,N_k^\Omega(x).
\]
\end{hypothesis}

This is precisely the range relevant to the truncation argument, since $N_{k-1}^\Omega(x/p)=0$
for $p>x/2^{k-1}$ by the support bound $m\geqslant 2^{k-1}$ on integers with $\Omega(m)=k-1$.
For the purposes of the present note, we treat \Cref{hyp:ratioOmega} as an external input.

\begin{corollary}\label{cor:Omega}
Fix $\kappa\in(0,1)$ and assume \Cref{hyp:ratioOmega}. Then there exists
$C_\kappa>0$ such that, uniformly for $x\geqslant 3$, $k\geqslant 1$ with \eqref{eq:window}, and all
$R\in[3,x]$,
\[
\#\left\{n\leqslant x:\ \Omega(n)=k,\ f_R(n)\ne f(n)\right\}
\leqslant C_\kappa\, r\,\eta_f(R)\,N_k^\Omega(x).
\]
\end{corollary}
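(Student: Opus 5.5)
The plan is to redo the direct count from the proof of \Cref{thm:abstract}, with one change. The $\Omega$-fibre is \emph{not} admissible in the sense of \eqref{eq:stab}: removing the full prime power $p^\nu$ lowers $\Omega$ by $\nu$, not by $1$. So instead of stripping $p^{v_p(n)}$, I will strip a single copy of $p$. This is exactly why \Cref{hyp:ratioOmega} is stated only for primes $p$ rather than for general $m$.

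\emph{Step 1 (reduction to a count over $P_R$).} Suppose $\Omega(n)=k$, $n\leqslant x$ and $f_R(n)\neq f(n)$. Since $f$ and $f_R$ agree on every prime outside $P_R$, the integer $n$ has a prime divisor $p\in P_R$. Write $n=pm$. Then $\Omega(m)=k-1$ and $m\leqslant x/p$, so $m\in\mathcal F^\Omega(x/p;k-1)$. For fixed $p$ the map $n\mapsto n/p$ is injective. Summing over $p\in P_R$ therefore gives
\[
\#\left\{n\leqslant x:\ \Omega(n)=k,\ f_R(n)\ne f(n)\right\}\leqslant \sum_{p\in P_R} N^\Omega_{k-1}\left(\frac{x}{p}\right).
\]
There is no sum over $\nu$ here; a $p^\nu$ sum would actually be lossy, since $n/p^\nu$ would lie in a different level set.

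\emph{Step 2 (range and ratio bound).} Any $m$ with $\Omega(m)=k-1$ satisfies $m\geqslant 2^{k-1}$. Hence only primes $p\leqslant x/2^{k-1}$ contribute, which is exactly the range of \Cref{hyp:ratioOmega}. For $x\geqslant x_0(\kappa)$ that hypothesis gives the bound
\[
C(\kappa)\,r\,N_k^\Omega(x)\sum_{p\in P_R}\frac1p\leqslant C(\kappa)\,r\,\eta_f(R)\,N_k^\Omega(x),
\]
where the last step uses \eqref{eq:PReta}.

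\emph{Step 3 (small $x$).} This is the only point needing a little care, because "absorbing into the constant" requires a lower bound on $r\,\eta_f(R)$. Take $3\leqslant x<x_0(\kappa)$. If the exceptional set is empty there is nothing to prove. Otherwise some $p\in P_R$ satisfies $p\leqslant x<x_0(\kappa)$, so by \eqref{eq:PReta}
\[
\eta_f(R)\geqslant \frac1p>\frac{1}{x_0(\kappa)}.
\]
Since also $r\geqslant\kappa$, the trivial bound $\#\{\cdots\}\leqslant N_k^\Omega(x)$ yields
\[
\#\{\cdots\}\leqslant \frac{x_0(\kappa)}{\kappa}\,r\,\eta_f(R)\,N_k^\Omega(x).
\]
Taking $C_\kappa:=\max\{C(\kappa),x_0(\kappa)/\kappa\}$ completes the argument.

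I expect no real obstacle. The only substantive point is Step 1: recognising that one must peel off a single prime factor rather than $p^{v_p(n)}$, so that the fibre index drops by exactly one and \Cref{hyp:ratioOmega} applies.
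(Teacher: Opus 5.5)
Your proposal is correct and follows the paper's proof essentially step for step: peel off a single prime $p\in P_R$ so that $\Omega(n/p)=k-1$, restrict to $p\leqslant x/2^{k-1}$ via the support bound, then apply \Cref{hyp:ratioOmega} and \eqref{eq:PReta}. Your Step 3 makes explicit what the paper compresses into ``absorbed into the constant'': the lower bound $\eta_f(R)\geqslant 1/p>1/x_0(\kappa)$, valid whenever the exceptional set is nonempty, justifies the absorption and gives a constant depending only on $\kappa$, not on $f$.
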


\begin{proof}
Suppose that $x\geqslant x_0(\kappa)$. If $f_R(n)\ne f(n)$, choose $p\in P_R$ dividing $n$ and write $n=pm$ with $m\leqslant x/p$.
Then $\Omega(m)=k-1$, hence
\[
\#\left\{n\leqslant x:\ \Omega(n)=k,\ f_R(n)\ne f(n)\right\}
\leqslant \sum_{p\in P_R} N_{k-1}^\Omega\left(\frac{x}{p}\right).
\]
Moreover, if $N_{k-1}^\Omega(x/p)\ne 0$, then there exists $\ell\leqslant x/p$ such that
$\Omega(\ell)=k-1$. Since necessarily $\ell\geqslant 2^{k-1}$, it follows that
$p\leqslant x/2^{k-1}$. Thus
\[
\#\left\{n\leqslant x:\ \Omega(n)=k,\ f_R(n)\ne f(n)\right\}
\leqslant \sum_{\substack{p\in P_R\\ p\leqslant x/2^{k-1}}} N_{k-1}^\Omega\left(\frac{x}{p}\right).
\]
Applying \Cref{hyp:ratioOmega}, we obtain
\[
\#\left\{n\leqslant x:\ \Omega(n)=k,\ f_R(n)\ne f(n)\right\}
\leqslant C(\kappa)\,r\,N_k^\Omega(x)\sum_{p\in P_R}\frac{1}{p}
\leqslant C(\kappa)\,r\,\eta_f(R)\,N_k^\Omega(x),
\]
by \eqref{eq:PReta}. The remaining finite range $3\leqslant x<x_0(\kappa)$ may be absorbed into the
constant.
\end{proof}

\subsection{Weighted fibres (abstract form)}\label{sec:weights}

Let $(w_p)_p$ be a sequence of nonnegative weights with $\sup_p w_p<\infty$, and define the strongly additive function
\[
W(n):=\sum_{p\mid n} w_p.
\]
For $x\geqslant 3$ and $t\in\mathbb R$, set
\[
\mathcal F_w(x;t):=\left\{n\in\mathbb N:\ n\leqslant x,\ W(n)=t\right\},
\qquad
N_w(x;t):=\#\mathcal F_w(x;t).
\]
In this abstract setting, the truncation argument only requires a local ratio estimate comparing the fibre at $(x,t)$ with the neighbouring fibres at $(x/p,t)$ and $(x/p,t-w_p)$ obtained by removing a prime factor $p$.

\begin{hypothesis}\label{hyp:ratioW}
Fix $\kappa\in(0,1)$. Suppose there exist a scale $\mu(x)\to\infty$, a parameter
\[
r_w:=\frac{t}{\mu(x)},
\]
and constants $x_0(\kappa)\geqslant 3$, $C(\kappa)>0$ such that, uniformly for
$x\geqslant x_0(\kappa)$, $\kappa\leqslant r_w\leqslant 1/\kappa$, and all primes $p\leqslant x$, one has
\[
N_w\left(\frac{x}{p};\,t\right)\leqslant \frac{C(\kappa)}{p}\,N_w(x;t),
\qquad
N_w\left(\frac{x}{p};\,t-w_p\right)\leqslant C(\kappa)\,\frac{r_w}{p}\,N_w(x;t).
\]
\end{hypothesis}

In concrete situations, \Cref{hyp:ratioW} is expected to follow from suitable uniform asymptotics for
$N_w(x;t)$, typically obtained by Selberg--Delange and saddle-point methods applied to an appropriate
two-parameter Dirichlet series. We do not pursue this here.

\begin{example}\label{ex:weights-nontrivial}
Let $E$ be a prime set satisfying \eqref{eq:density}, and fix a constant $c>0$. Define
$w_p:=c$ for $p\in E$ and $w_p:=0$ otherwise. Then
\[
W(n)=c\,\omega(n;E),
\]
so that, for integers $k\geqslant 0$,
\[
\mathcal F_w(x;ck)=\mathcal F_E(x;k),
\qquad
N_w(x;ck)=N_{E,k}(x).
\]
With $\mu(x):=c\,E(x)$, one has
\[
r_w:=\frac{ck}{\mu(x)}=\frac{k}{E(x)}=r_E.
\]
Thus \Cref{prop:ratioE} yields the expected local quotient bounds in the restricted range
$p\leqslant x^{1-\kappa/2}$, namely
\[
N_w\left(\frac{x}{p};\,ck\right)\asymp_\kappa \frac{1}{p}\,N_w(x;ck),
\]
and, for $p\in E$ and $k\geqslant 1$,
\[
N_w\left(\frac{x}{p};\,ck-c\right)\asymp_\kappa \frac{r_w}{p}\,N_w(x;ck).
\]
This recovers the weighted ratio pattern locally, but does not by itself verify
\Cref{hyp:ratioW}, since the complementary range of larger primes is not treated.
\end{example}

\begin{remark}\label{rem:weights-finiteperturb}
Let $w_p:=1$ for all odd primes $p$ and $w_2:=2$. Then
\[
W(n)=\omega(n)+\mathbf 1_{2\mid n}.
\]
In the central window $t\asymp \logtwo x$, the fibres $\mathcal F_w(x;t)$ are controlled by the
usual fibres of $\omega$, with only a finite perturbation at the prime $2$. This strongly suggests
that a ratio estimate of the form \Cref{hyp:ratioW} should follow from the local quotient bounds of
Hildebrand--Tenenbaum \cite{HT1988}, after a separate treatment of the contribution of the prime $2$.
We do not pursue this here.
\end{remark}

\begin{corollary}\label{cor:weights}
Fix $\kappa\in(0,1)$ and assume \Cref{hyp:ratioW}. Then there exists $C_\kappa>0$ such that,
uniformly for $x\geqslant 3$, $\kappa\leqslant r_w\leqslant 1/\kappa$, and all $R\in[3,x]$,
\[
\#\left\{n\in \mathcal F_w(x;t):\ f_R(n)\ne f(n)\right\}
\leqslant C_\kappa\, r_w\,\eta_f(R)\,N_w(x;t).
\]
\end{corollary}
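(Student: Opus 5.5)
The plan is to run the same direct count as in the proof of \Cref{thm:abstract} and \Cref{cor:Omega}, now with the second inequality of \Cref{hyp:ratioW} as the ratio input. First, restrict to $x\geqslant x_0(\kappa)$; the finitely many smaller values of $x$ are absorbed into $C_\kappa$ at the end. For those small $x$ I will either bound the exceptional count trivially by $N_w(x;t)$ and use $r_w\geqslant\kappa$, or simply enlarge $C(\kappa)$ as in the earlier proofs.

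\emph{Reduction to a sum over $P_R$.} If $n\in\mathcal F_w(x;t)$ and $f_R(n)\ne f(n)$, then $n$ has a prime divisor $p\in P_R$. Write $n=p^\nu m$ with $\nu=v_p(n)\geqslant 1$ and $(m,p)=1$. Since $W$ is strongly additive,
\[
W(m)=W(n)-w_p=t-w_p,
\]
so $m\in\mathcal F_w(x/p^\nu;\,t-w_p)$. This gives
\[
\#\{n\in\mathcal F_w(x;t):\ f_R(n)\ne f(n)\}\leqslant \sum_{p\in P_R}\sum_{\nu\geqslant 1} N_w\left(\frac{x}{p^\nu};\,t-w_p\right).
\]

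\emph{Handling higher prime powers.} \Cref{hyp:ratioW} is stated only for $x/p$ with $p$ prime, not for $x/p^\nu$, so this is the main obstacle. I would try two routes. The simplest is to use the injection $n\mapsto m$ directly, without splitting by $\nu$. Then $m\leqslant x/p^\nu\leqslant x/p$, so every such $m$ lies in $\mathcal F_w(x/p;\,t-w_p)$. Moreover, $m$ together with $p$ determines $n$ up to the choice of $\nu$. To remove that ambiguity, note that for fixed $p$ the set of $n\leqslant x$ with $p\mid n$ whose $p$-free part is a given $m$ has size at most $\log x/\log p$, which is too lossy. So instead I would bound the count for each fixed $\nu$ by $N_w(x/p^\nu;t-w_p)\leqslant N_w(x/p;t-w_p)$, using monotonicity in $x$. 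Summing over $\nu$ then loses a factor that is not summable, so this alone is not enough either.

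The better route is to iterate the first inequality of \Cref{hyp:ratioW}, which concerns the fibre at fixed level. Replace $t$ by $t'=t-w_p$; since $w_p$ is bounded, $r_w'=t'/\mu(x)$ still lies in a slightly enlarged window, e.g.\ $[\kappa/2,2/\kappa]$ for large $x$, after adjusting $\kappa$. This yields $N_w(x/p^{\nu};t')\leqslant (C/p)^{\nu-1}N_w(x/p;t')$ for $\nu\geqslant 2$, provided the hypothesis also applies at the scale $x/p^{j}$ (where $\mu$ changes). If that step is delicate, I would fall back on counting only $\nu=1$ through the second inequality. The $\nu\geqslant2$ contributions I would bound by noting that $p^2\mid n$ with $p>R$, and that $\sum_{p>R}p^{-2}\leqslant 1/R\ll\eta_f(R)$ is not guaranteed. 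So the iteration route is the one I would commit to, accepting the window enlargement.

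\emph{Conclusion.} Granting the reduction to $\sum_{p\in P_R} N_w(x/p;t-w_p)$, up to a factor $\sum_\nu (C/p)^{\nu-1}\leqslant 2$ for $p>2C$, the second inequality of \Cref{hyp:ratioW} gives the bound
\[
C(\kappa)\,r_w\,N_w(x;t)\sum_{p\in P_R}\frac1p\leqslant C(\kappa)\,r_w\,\eta_f(R)\,N_w(x;t),
\]
by \eqref{eq:PReta}. The primes $p\leqslant 2C$ in $P_R$ are finitely many, and their contribution is absorbed into the constant using $R\geqslant 3$. This gives the stated bound with $C_\kappa$ a multiple of $C(\kappa)$.
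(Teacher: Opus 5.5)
Your reduction to $\sum_{p\in P_R}\sum_{\nu}N_w(x/p^\nu;t-w_p)$ and the final summation are fine. The step you commit to for the higher prime powers, however, does not follow from \Cref{hyp:ratioW}.

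\textbf{Where the iteration fails.} Iterating the first inequality to get $N_w(x/p^\nu;t')\leqslant (C/p)^{\nu-1}N_w(x/p;t')$ means applying the hypothesis at the scales $X=x/p^{j}$ ($1\leqslant j\leqslant \nu-1$) with level $t'=t-w_p$. That requires $X\geqslant x_0(\kappa)$ and $\kappa\leqslant t'/\mu(X)\leqslant 1/\kappa$. Nothing guarantees either condition. You only know $p^\nu\leqslant x$, so $x/p^{j}$ can be bounded. Then $\mu(x/p^j)$ is far smaller than $\mu(x)$ (think of $\mu\asymp\logtwo$), so $t'/\mu(x/p^j)$ leaves every fixed window, and $x/p^j$ may fall below $x_0(\kappa)$. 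This is not a bounded ``window enlargement''. Moreover, even the shift from level $t$ to $t'$ needs the hypothesis for a smaller $\kappa$, which is not assumed. So your argument does not control the $\nu\geqslant 2$ terms.

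\textbf{The missing idea.} Do not strip the full power of $p$; remove a single copy. For fixed $p\in P_R$, the map $n\mapsto m:=n/p$ is injective and $m\leqslant x/p$. Because $W$ depends only on the set of prime divisors, $W(m)=t-w_p$ if $p\nmid m$, and $W(m)=t$ if $p\mid m$. Hence
\[
\#\{n\in\mathcal F_w(x;t):\ p\mid n\}\leqslant N_w\left(\frac{x}{p};\,t-w_p\right)+N_w\left(\frac{x}{p};\,t\right).
\]
Both terms are covered by \Cref{hyp:ratioW} at the original scale $x$ and level $t$. The first inequality of the hypothesis exists precisely to handle the case $p^2\mid n$. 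Summing over $p\in P_R$, $p\leqslant x$, gives
\[
C(\kappa)\,N_w(x;t)\sum_{p\in P_R}\frac{r_w+1}{p}\leqslant C(\kappa)\left(1+\frac{1}{\kappa}\right)r_w\,\eta_f(R)\,N_w(x;t),
\]
using $r_w\geqslant\kappa$ and \eqref{eq:PReta}. This is the paper's proof, and it also removes the need for your separate treatment of the primes $p\leqslant 2C$.

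You discarded the ``$\nu=1$ plus $p^2\mid n$'' split because you tried to bound the second piece by $\sum_{p>R}p^{-2}$. The right bound for that piece is the fibre estimate $N_w(x/p;t)\leqslant (C(\kappa)/p)\,N_w(x;t)$, not a density bound.
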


\begin{proof}
Suppose that $x\geqslant x_0(\kappa)$. If $f_R(n)\ne f(n)$, choose $p\in P_R$ dividing $n$ and write
$n=pm$ with $m\leqslant x/p$.

If $p\nmid m$, then $W(m)=t-w_p$. If $p\mid m$, then $W(m)=t$, since $W$ depends only on the set
of prime divisors. Hence
\[
\#\left\{n\in \mathcal F_w(x;t):\ p\mid n\right\}
\leqslant
N_w\left(\frac{x}{p};\,t-w_p\right)+N_w\left(\frac{x}{p};\,t\right).
\]
Summing over $p\in P_R$ with $p\leqslant x$, we obtain
\[
\#\left\{n\in \mathcal F_w(x;t):\ f_R(n)\ne f(n)\right\}
\leqslant
\sum_{\substack{p\in P_R\\ p\leqslant x}}
\left(
N_w\left(\frac{x}{p};\,t-w_p\right)+N_w\left(\frac{x}{p};\,t\right)
\right).
\]
Applying \Cref{hyp:ratioW} gives
\[
\#\left\{n\in \mathcal F_w(x;t):\ f_R(n)\ne f(n)\right\}
\leqslant
C(\kappa)\,N_w(x;t)\sum_{\substack{p\in P_R\\ p\leqslant x}}
\left(\frac{r_w}{p}+\frac{1}{p}\right)
\ll_\kappa
r_w\,N_w(x;t)\sum_{p\in P_R}\frac{1}{p},
\]
since $r_w\geqslant \kappa$. Using \eqref{eq:PReta} yields
\[
\#\left\{n\in \mathcal F_w(x;t):\ f_R(n)\ne f(n)\right\}
\ll_\kappa r_w\,\eta_f(R)\,N_w(x;t).
\]
The remaining finite range $3\leqslant x<x_0(\kappa)$ is absorbed into the constant.
\end{proof}

\bibliographystyle{plain} 
\bibliography{refs}
\end{document}